\newtheorem{theorem}{Theorem}[section]
\newtheorem{proposition}[theorem]{Proposition}
\newtheorem{corollary}[theorem]{Corollary}
\newtheorem{definition}[theorem]{Definition}
\theoremstyle{definition}
\newtheorem{remark}[theorem]{Remark}
\newtheorem{example}[theorem]{Example}
\newtheorem{examples}[theorem]{Examples}
\renewcommand{\ge}{\geqslant}
\renewcommand{\le}{\leqslant}
\newcommand{\set}[1]{\mathcal{#1}}
\newcommand{\mat}[1]{\bm{\mathsf{#1}}}
\newcommand{\imat}[1]{\mat{#1}^{-1}}
\newcommand{\I}{\mat I_m}
\newcommand{\abs}[1]{\left|#1\right|}
\newcommand{\norm}[1]{\left\|#1\right\|_\infty}
\newcommand{\sr}[1]{\varrho\left(#1\right)}
\newcommand{\C}{\mathbb{C}^{m}}
\newcommand{\R}{\mathbb{R}^{m}}
\newcommand{\CC}{\mathbb{C}^{m \times m}}
\newcommand{\RR}{\mathbb{R}^{m \times m}}
\DeclareMathOperator{\diag}{diag}
\DeclareMathOperator{\off}{off}
\DeclareMathOperator{\tril}{tril}
\DeclareMathOperator{\triu}{triu}
\newcommand{\matle}{\preceq}
\newcommand{\matge}{\succeq}
\renewcommand{\t}{\vartheta}
\newcommand{\emptymat}{[\raisebox{0.5pt}{$\,\emptymatn\,$}]}
\newcommand{\emptymatn}{\star}
\title[The Sassenfeld criterion and H-matrices]{A note on the Sassenfeld criterion and its relation to H-matrices}
\author[T.~P.~Wihler]{Thomas P. Wihler}
\address{Mathematics Institute, University of Bern, Switzerland}
\keywords{Sassenfeld criterion; Sassenfeld matrices; convergence of iterative linear solvers; splitting methods; Gau\ss-Seidel scheme; preconditioning; H-matrices.}
\subjclass[2020]{15B48, 65F08, 65F10}
\begin{document}

\begin{abstract}
The starting point of this note is a decades-old yet little-noticed sufficient condition, presented by Sassenfeld in 1951, for the convergence of the classical Gau\ss-Seidel method. The purpose of the present paper is to shed new light on \emph{Sassenfeld's criterion} and to demonstrate that it is directly related to \emph{H-matrices}. In particular, our results yield a new characterization of H-matrices. In addition, the convergence of iterative linear solvers that involve H-matrix preconditioners is briefly discussed.
\end{abstract}

\maketitle

\section{Introduction}

The Gau\ss-Seidel method is amongst the most classical iterative schemes for the solution of systems of linear equations.  Traditionally, in many numerical analysis textbooks, convergence is established for matrices that are either strictly diagonally dominant or symmetric positive definite. Only a few authors (see, e.g., \cite[Thm.~4.16]{Wendland:17}) point to a less standard convergence criterion for the Gau\ss-Seidel scheme that was introduced by Sassenfeld in his paper~\cite{Sassenfeld:51}: Given a matrix $\mat A=[a_{ij}]\in\CC$ with non-vanishing diagonal entries, i.e.~ $a_{ii}\neq 0$ for each $i=1,\ldots,m$,  define non-negative real numbers $s_1,\ldots,s_m$ iteratively by
\begin{equation}\label{eq:SF}
s_i=\frac{1}{|a_{ii}|}\Bigg(\sum_{j<i}|a_{ij}|s_j+\sum_{j>i}|a_{ij}|\Bigg),\qquad i=1,\ldots,m.
\end{equation}
Sassenfeld has proved that the condition
\begin{equation}\label{eq:s01}
0\le s_i<1\qquad \forall~i=1,\ldots,m,
\end{equation}
is sufficient for the convergence of the Gau\ss-Seidel iteration. Matrices that satisfy this property (which is closely related to generalized diagonal dominance, see, e.g., \cite{JamesRiha:74}) were discussed recently in~\cite{BaumannWihler:17}.

The purpose of the present note is to show that there is a more general principle behind Sassenfeld's original work that is intimately related to H-matrices. To illustrate this observation, we note that~\eqref{eq:SF} can be written in matrix form as
\begin{equation}\label{eq:SF'}
(\abs{\mat D}-\abs{\mat L})\mat s=\abs{\mat U}\mat e,
\end{equation}
where the matrix $\mat A=\mat L+\mat D+\mat U$ is decomposed in the usual way into the (strict) lower and upper triangular parts $\mat L=\tril(\mat A)$ and $\mat U=\triu(\mat A)$, respectively, and the diagonal part $\mat D=\diag(\mat A)$; furthermore, $\abs{\emptymat}$ signifies the modulus of a matrix $\emptymat$ taken entry-wise, $\mat s=(s_1,\ldots,s_m)$ is a vector that contains the iteratively defined real numbers $s_1,\ldots,s_m$ from~\eqref{eq:SF}, and 
\begin{equation}\label{eq:e}
\mat e=(1,\ldots,1)^\top\in\R
\end{equation} 
is the (column) vector with all components~1. More generally, for appropriate matrices $\mat P\in\CC$, we consider the splitting 
\[
\mat A=\off(\mat P)+\diag(\mat P)+(\mat A-\mat P),
\]
where $\off(\emptymat)$ denotes the off-diagonal part of a matrix. Then, define the vector $\mat s\in\R$ to be the solution (if it exists) of the system
\begin{equation}\label{eq:SF''}
\left(\abs{\diag(\mat P)}-\abs{\off(\mat P)}\right)\mat s=\abs{\mat A-\mat P}\mat e.
\end{equation} 
For instance, in the context of the Gau\ss-Seidel scheme, letting $\mat P:=\mat L+\mat D$, with $\mat L$ and $\mat D$ as above, we notice that~\eqref{eq:SF''} translates immediately into~\eqref{eq:SF'}. In this work, we will focus on matrices $\mat A$ and $\mat P$ for which the components of the solution vector $\mat s$ of the linear system~\eqref{eq:SF''} satisfy the Sassenfeld criterion~\eqref{eq:s01}.

\subsection*{Outline}

We begin our work by reviewing the class of H-matrices, see~\S\ref{sc:Hmatrices}, which was originally introduced in \cite{Ostrowski:37}, and plays a crucial role in the convergence of iterative splitting methods (especially, the Jacobi, Gau\ss-Seidel, and SOR schemes); in the context of this paper, such matrices are exactly those for which the system~\eqref{eq:SF''} is non-singular. In \S\ref{sc:SI} we continue by introducing the so-called \emph{Sassenfeld index}, which is an essential quantity for our analysis, and derive some basic estimates. Subsequently, in \S\ref{sc:sm}, based on the previously defined Sassenfeld index, we will focus on all matrices for which the bounds~\eqref{eq:s01} for the solution vector $\mat s$ of~\eqref{eq:SF''} can be achieved; such matrices will be termed \emph{Sassenfeld matrices}. Our main result (Thm.~\ref{thm:main}) will establish an equivalence for  Sassenfeld matrices and  (non-singular) H-matrices; in this regard, our work provides a new characterization of H-matrices. In addition, a computational verification procedure is proposed (see Prop.~\ref{prop:it}); cf.~the related papers~\cite{OjiroNikiUsui:03,BruGimenezHadjidimos:12}. Finally, we conclude this article with a few remarks in~\S\ref{sc:concl}.

\subsection*{Notation}
For any vectors or matrices $\mat X,\mat Y\in\mathbb{R}^{m\times n}$, we use the notation $\mat X\matge\mat Y$ (or $\mat X\succ\mat Y$) to indicate that all entries of the difference $\mat X-\mat Y\in\mathbb{R}^{m\times n}$ are non-negative (resp.~positive). Furthermore, for a matrix $\mat A=[a_{ij}]\in\mathbb{R}^{m\times n}$, we denote by 
$
\norm{\mat A}:=\max_{1\le i\le m}\sum_{j=1}^n|a_{ij}| 
$
the standard $\infty$-norm. Moreover, we signify by $\sr{\mat A}$ the spectral radius of a square-matrix $\mat A\in\CC$, and $\I\in\CC$ is the identity matrix.

\section{A brief review of H-matrices}\label{sc:Hmatrices}

We will denote by $\set{H}_m$ the subset of all H-matrices in $\CC$. This set was originally introduced in~\cite{Ostrowski:37}, see also~\cite{BCGM:08,BP:94,Varga:00}, and consists of all matrices $\mat A=[a_{ij}]\in\CC$ for which the associated comparison matrix, given by
\[
\mathfrak{M}(\mat A):=
\abs{\diag(\mat A)}-\abs{\off(\mat A)}
=\begin{cases}
-|a_{ij}|,&\text{if }i\neq j,\\
+|a_{ii}|,&\text{if }i=j,
\end{cases}\qquad 1\le i,j\le m,
\]
is a non-singular M-matrix, i.e. it takes the form $\mathfrak{M}(\mat A)=r\I-\mat B$, for a matrix $\mat B\matge\mat 0$, with $r>\sr{\mat B}$. \medskip

We collect a few well-known facts about H-matrices that are instrumental for the present work.
\begin{enumerate}[(F1)]

\item
We first remark that matrices in $\set{H}_m$ are non-singular. Indeed, suppose to the contrary that there is $\mat A\in\set{H}_m$ and a vector $\mat x\in\C$ with $\|\mat x\|_\infty=1$ and $\mat A\mat x=\mat 0$. Then, it follows that 
\[
\mathfrak{M}(\mat A)\abs{\mat x}
=\abs{\diag(\mat A)\mat x}-\abs{\off(\mat A)}\abs{\mat x}
=\abs{-\off(\mat A)\mat x}-\abs{\off(\mat A)}\abs{\mat x}\matle\mat 0,
\]
and thus $r\abs{\mat x}
=\mathfrak{M}(\mat A)\abs{\mat x}+\mat B\abs{\mat x}
\matle\mat B\abs{\mat x}$, with $\mat B\matge\mat 0$ as above. This
implies that $\abs{\mat x}\matle r^{-1}\mat B\abs{\mat x}$. Iteratively, for any $n\in\mathbb{N}$, we infer that $\abs{\mat x}\matle \left(r^{-1}\mat B\right)^n\abs{\mat x}$. Exploiting that $\sr{r^{-1}\mat B}<1$ and letting $n\to\infty$, we deduce that $\mat x=\mat 0$, a contradiction. \medskip

\item It is well-known, see e.g.~\cite[Thm.~5']{Fan:58}, that $\mat A\in\set{H}_m$ if and only if there is a positive real vector $\mat u\succ\mat 0$ such that $\mathfrak{M}(\mat A)\mat u\succ\mat 0$; in individual components, this means that there are positive numbers $u_1,\ldots,u_m>0$ such that
\[
|a_{ii}|u_i>\sum_{j\neq i}|a_{ij}|u_j\qquad\forall i=1,\ldots,m;
\]
incidentally, this property refers to the notion of generalized diagonal dominance (by rows); cf., e.g., ~\cite{JamesRiha:74}. In particular, the above bound implies that the diagonal entries of any matrix in $\set{H}_m$ are all non-zero.\medskip

\item
Furthermore, for $\mat A\in\set{H}_m$, the inverse matrix of $\mathfrak{M}(\mat A)$ exists and is non-negative; indeed, since $\sr{r^{-1}\mat B}<1$, with $\mat B$ from above, we have
\[
\mathfrak{M}(\mat A)^{-1}
=\left(r\I-\mat B\right)^{-1}
=r^{-1}\left(\I-r^{-1}\mat B\right)^{-1}
=\sum_{k\ge 0}r^{-1-k}\mat B^k\matge\mat 0.
\]

\item Finally, for any matrix $\mat A\in\set{H}_m$, it holds that
\[
\sr{\abs{\diag(\mat A)}^{-1}\abs{\off(\mat A)}}
=
\sr{\diag(\mathfrak{M}(\mat A))^{-1}\off(\mathfrak{M}(\mat A)}<1;
\]
see, e.g.~\cite[Thm.~1~(vii)]{Varga:76}. 

\end{enumerate}

\section{Sassenfeld index}\label{sc:SI}

For a non-singular matrix $\mat A\in\CC$, a right-hand side vector $\mat b\in\C$, and an arbitrary starting vector $\mat x_0\in\C$, we will be interested in the iterative splitting scheme
\begin{equation}\label{eq:it}
\mat P\mat x_{n+1}=(\mat P-\mat A)\mat x_n+\mat b,\qquad n\ge 0,
\end{equation}
for the solution of the linear system
\begin{equation}\label{eq:Ax=b}
\mat A\mat x=\mat b.
\end{equation}
The focus of this work will be on preconditioners $\mat P\in\set{H}_m$. \medskip

From fact  (F3) above, for $\mat P\in\mathcal{H}_m$, we infer that the vector
defined by
\begin{equation}\label{eq:s}
\mat s(\mat A,\mat P):=\mathfrak{M}(\mat P)^{-1}\abs{\mat A-\mat P}\mat e\matge\mat 0,
\end{equation}
with~$\mat e\in\mathbb{R}^m$ from~\eqref{eq:e}, is well-defined and contains only non-negative components. 

\begin{definition}[Sassenfeld index]
The \emph{Sassenfeld index} of a matrix $\mat A\in\CC$ with respect to a preconditioner $\mat P\in\set{H}_m$ is defined by $\mu(\mat A,\mat P):=\norm{\mat s(\mat A,\mat P)}$, with the vector $\mat s(\mat A,\mat P)$ from~\eqref{eq:s}.
\end{definition}

The essence of the Sassenfeld index defined above is that it allows to control the norm $\norm{\I-\imat P\mat A}$ of the iteration matrix in the splitting method~\eqref{eq:it} in a non-standard way.
 
\begin{proposition}\label{pr:bound}
Let $\mat A\in\CC$ be a non-singular matrix, and $\mat P\in\set{H}_m$. Then, it holds that
\begin{equation}\label{eq:bound}
\norm{\I-\imat P\mat A}
\le\mu(\mat A,\mat P).
\end{equation}
\end{proposition}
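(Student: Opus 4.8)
The plan is to rewrite the iteration matrix as $\I-\imat P\mat A=\imat P(\mat P-\mat A)$ and then to control it entrywise. Write $\mat M:=\I-\imat P\mat A$ and $\mat R:=\mat P-\mat A$, so that $\abs{\mat R}=\abs{\mat A-\mat P}$ and, since $\mat P$ is invertible by~(F1), $\mat M$ is well-defined with $\mat P\mat M=\mat R$. Splitting $\mat P=\diag(\mat P)+\off(\mat P)$ and isolating the diagonal term, this becomes $\diag(\mat P)\mat M=\mat R-\off(\mat P)\mat M$.

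The next step I would take is to pass to entrywise moduli. Because $\diag(\mat P)$ is diagonal, one has $\abs{\diag(\mat P)\mat M}=\abs{\diag(\mat P)}\,\abs{\mat M}$, whereas the entrywise triangle inequality bounds the right-hand side by $\abs{\mat R}+\abs{\off(\mat P)}\,\abs{\mat M}$. Rearranging gives the entrywise matrix inequality
\[
\mathfrak{M}(\mat P)\abs{\mat M}
=\bigl(\abs{\diag(\mat P)}-\abs{\off(\mat P)}\bigr)\abs{\mat M}
\matle\abs{\mat R}.
\]
Multiplying on the right by $\mat e\matge\mat 0$ from~\eqref{eq:e}, and observing that $\abs{\mat M}\mat e\in\R$ is precisely the vector whose $i$-th entry is the absolute row sum $\sum_j\abs{m_{ij}}$, I obtain $\mathfrak{M}(\mat P)\,(\abs{\mat M}\mat e)\matle\abs{\mat A-\mat P}\mat e$.

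Finally I would invoke~(F3): since $\mat P\in\set{H}_m$, the inverse $\mathfrak{M}(\mat P)^{-1}$ exists and is entrywise non-negative, so multiplication by it preserves entrywise inequalities between real vectors. Applying it to the previous line yields
\[
\abs{\mat M}\mat e\matle\mathfrak{M}(\mat P)^{-1}\abs{\mat A-\mat P}\mat e=\mat s(\mat A,\mat P),
\]
with $\mat s(\mat A,\mat P)$ as in~\eqref{eq:s}. Taking the largest component on both sides and recalling that $\norm{\mat M}=\max_i(\abs{\mat M}\mat e)_i$ gives $\norm{\I-\imat P\mat A}=\norm{\mat M}\le\norm{\mat s(\mat A,\mat P)}=\mu(\mat A,\mat P)$, which is~\eqref{eq:bound}.

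I do not expect a genuine obstacle here: the argument is a short chain of entrywise estimates. The two points that require a little care are the identity $\abs{\diag(\mat P)\mat M}=\abs{\diag(\mat P)}\abs{\mat M}$, which holds precisely because the left factor is diagonal, and the monotonicity of the map $\mat v\mapsto\mathfrak{M}(\mat P)^{-1}\mat v$ guaranteed by the non-negativity of $\mathfrak{M}(\mat P)^{-1}$ from~(F3). It is also worth noting that non-singularity of $\mat A$ plays no role in the estimate; only the hypothesis $\mat P\in\set{H}_m$ is used.
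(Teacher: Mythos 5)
Your proof is correct and follows essentially the same route as the paper: split $\mat P$ into diagonal and off-diagonal parts, take entrywise moduli to get $\mathfrak{M}(\mat P)\abs{\cdot}\matle\abs{\mat A-\mat P}\,(\cdot)$, and invoke the non-negativity of $\mathfrak{M}(\mat P)^{-1}$ from (F3). The only (harmless) difference is that you run the estimate at the level of the full matrix $\abs{\mat M}$ and read off the row sums via $\abs{\mat M}\mat e$, whereas the paper applies the same computation to $(\I-\imat P\mat A)\mat y$ for an arbitrary unit vector $\mat y$; your closing remark that non-singularity of $\mat A$ is not needed is also accurate.
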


\begin{proof}
Consider an arbitrary vector $\mat y\in\C$ with $\norm{\mat y}=1$. Defining $\mat R=\mat P-\mat A$, we let
\begin{equation}\label{eq:aux20201006a}
\mat x:=\imat P\mat R\mat y=\imat P(\mat P-\mat A)\mat y=(\I-\imat P\mat A)\mat y.
\end{equation}
Note first that
$
\diag(\mat P)\mat x+\off(\mat P)\mat x=\mat R\mat y.
$
Taking moduli results in
\[
\mathfrak{M}(\mat P)\abs{\mat x}\matle\abs{\mat R}\abs{\mat y}\matle\abs{\mat R}\mat e.
\]
Recalling that $\mathfrak{M}(\mat P)^{-1}\matge\mat 0$, cf.~fact (F3) above, and employing~\eqref{eq:s}, we deduce that
\[
\abs{\mat x}\matle\mathfrak{M}(\mat P)^{-1}\abs{\mat R}\mat e=\mat s(\mat A,\mat P).
\]
Therefore, using~\eqref{eq:aux20201006a}, we infer that
\[
\norm{(\I-\imat P\mat A)\mat y}=\norm{\mat x}\le\norm{\mat s(\mat A,\mat P)},
\]
which yields~\eqref{eq:bound}.
\end{proof}

\begin{corollary}[Invertibility]\label{cor:inv}
Given a matrix $\mat A$, and a preconditioner $\mat P\in\set{H}_m$. Then, the matrix $\mat A_\tau=\mat A+\tau\mat P$ is non-singular whenever~$|\tau+1|>\mu(\mat A,\mat P)$. 
\end{corollary}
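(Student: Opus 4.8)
The plan is to reduce the invertibility of $\mat A_\tau$ to that of a small perturbation of the identity, using the bound from Proposition~\ref{pr:bound}. Since $\mat P\in\set{H}_m$, fact (F1) guarantees that $\mat P$ is non-singular, so $\imat P$ exists and it suffices to show that $\imat P\mat A_\tau$ is non-singular. The key observation is the algebraic identity
\[
\imat P\mat A_\tau=\imat P(\mat A+\tau\mat P)=\imat P\mat A+\tau\I=(1+\tau)\I-(\I-\imat P\mat A),
\]
which rewrites $\imat P\mat A_\tau$ as $(1+\tau)\I$ minus the iteration matrix $\I-\imat P\mat A$ whose $\infty$-norm is controlled by $\mu(\mat A,\mat P)$.

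Next I would exploit the hypothesis $|\tau+1|>\mu(\mat A,\mat P)$. Since $\mu(\mat A,\mat P)=\norm{\mat s(\mat A,\mat P)}\ge 0$ (recall $\mat s(\mat A,\mat P)\matge\mat 0$ from~\eqref{eq:s}), the inequality forces $1+\tau\neq 0$, so we may factor
\[
\imat P\mat A_\tau=(1+\tau)\left(\I-(1+\tau)^{-1}(\I-\imat P\mat A)\right).
\]
By Proposition~\ref{pr:bound},
\[
\norm{(1+\tau)^{-1}(\I-\imat P\mat A)}=|1+\tau|^{-1}\norm{\I-\imat P\mat A}\le|1+\tau|^{-1}\mu(\mat A,\mat P)<1,
\]
hence $\sr{(1+\tau)^{-1}(\I-\imat P\mat A)}<1$, and the matrix $\I-(1+\tau)^{-1}(\I-\imat P\mat A)$ is invertible (Neumann series). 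Consequently $\imat P\mat A_\tau$ is a product of invertible matrices, and therefore $\mat A_\tau=\mat P\cdot(\imat P\mat A_\tau)$ is non-singular.

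There is no real obstacle here: the argument is essentially a one-line rearrangement followed by a Neumann-series estimate, and every ingredient (non-singularity of $\mat P$, the norm bound, non-negativity of $\mat s$) is already available in the excerpt. The only point requiring a moment's care is noting that the strict inequality $|\tau+1|>\mu(\mat A,\mat P)\ge 0$ both makes the scalar factor $1+\tau$ nonzero and makes the perturbation term have norm strictly less than one; both uses are needed for the factorization and the Neumann series, respectively.
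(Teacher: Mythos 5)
Your proof is correct and rests on the same key ingredient as the paper's, namely the bound $\norm{\I-\imat P\mat A}\le\mu(\mat A,\mat P)$ from Proposition~\ref{pr:bound} combined with the hypothesis $|\tau+1|>\mu(\mat A,\mat P)$. The only difference is cosmetic: the paper argues by contradiction, applying the identity $(\tau+1)\mat v=(\I-\imat P\mat A)\mat v$ to a putative unit null vector $\mat v$ of $\mat A_\tau$, whereas you invert $\imat P\mat A_\tau=(1+\tau)\bigl(\I-(1+\tau)^{-1}(\I-\imat P\mat A)\bigr)$ explicitly via a Neumann series; both routes are equally valid and of essentially the same length.
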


\begin{proof}
We apply a contradiction argument. To this end, suppose that there exists~$\mat v\in\C$, $\norm{\mat v}=1$, such that~$\mat A_\tau\mat v=\mat 0$. Then, it holds that $(\tau+1)\mat P\mat v=(\mat P-\mat A)\mat v$, and thus $(\tau+1)\,\mat v=\mat P^{-1}(\mat P-\mat A)\mat v$. Taking norms, and using~\eqref{eq:bound}, yields
\[
|\tau+1|=
\norm{(\I-\imat P\mat A)\mat v}
\le\norm{\I-\imat P\mat A}
\le\mu(\mat A,\mat P),
\]
which causes a contradiction to the range of $\tau$.
\end{proof}
  
We note that the vector $\mat s(\mat A,\mat P)$ from~\eqref{eq:s} can be computed approximately by iteration. Indeed, if $\mat P\in\set{H}_m$, then the diagonal entries of $\mat P$ do not vanish, cf.~fact (F2) above, and the iterative scheme given by
\begin{equation}\label{eq:sk}
\abs{\diag(\mat P)}\mat s^{(k+1)}=\abs{\off(\mat P)}\mat s^{(k)}+\abs{\mat A-\mat P}\mat e,\qquad k\ge 0,
\end{equation}
converges to the vector $\mat s(\mat A,\mat P)$ from~\eqref{eq:s} for any initial vector $\mat s^{(0)}\in\C$. Furthermore, the following result provides a computational upper bound for the Sassenfeld index.

\begin{proposition}[Iterative estimation of the Sassenfeld index]\label{prop:it}
Consider a matrix $\mat A\in\CC$, and a preconditioner $\mat P\in\set{H}_m$. Then, there exists an initial vector $\mat s^{(0)}\in\R$ such that
\begin{equation}\label{eq:20210326}
\abs{\mat A-\mat P}\mat e\matle \mathfrak{M}(\mat P)\mat s^{(0)}.
\end{equation}
Furthermore, if the iteration~\eqref{eq:sk} is initiated by a vector $\mat s^{(0)}$ (for $k=0$) that satisfies~\eqref{eq:20210326}, then it holds the bound $\mu(\mat A,\mat P)\le\norm{\mat s^{(k)}}$ for all $k\ge 0$, and $\lim_{k\to\infty}\norm{\mat s^{(k)}}=\mu(\mat A,\mat P)$.
\end{proposition}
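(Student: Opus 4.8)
The plan is to regard the iteration~\eqref{eq:sk} as an affine fixed-point iteration and to exploit monotonicity. I would write $\mat D:=\abs{\diag(\mat P)}$ (invertible by fact (F2)), $\mat F:=\abs{\off(\mat P)}\matge\mat 0$, and $\mat v:=\abs{\mat A-\mat P}\mat e\matge\mat 0$, so that $\mathfrak{M}(\mat P)=\mat D-\mat F$ and~\eqref{eq:sk} reads $\mat s^{(k+1)}=\mat G\mat s^{(k)}+\mat D^{-1}\mat v$ with $\mat G:=\mat D^{-1}\mat F\matge\mat 0$. By fact (F4), $\sr{\mat G}<1$, so this iteration converges, for any starting vector, to its unique fixed point $\mathfrak{M}(\mat P)^{-1}\mat v=\mat s(\mat A,\mat P)$ from~\eqref{eq:s} (as already observed in the text following~\eqref{eq:sk}).

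For the \emph{existence} of an admissible $\mat s^{(0)}$, I would invoke fact (F2): choose $\mat u\succ\mat 0$ with $\mathfrak{M}(\mat P)\mat u\succ\mat 0$ and set $\mat s^{(0)}:=c\,\mat u$ for a scalar $c>0$ large enough that $c\,(\mathfrak{M}(\mat P)\mat u)_i\ge(\mat v)_i$ for every $i$ (possible since each $(\mathfrak{M}(\mat P)\mat u)_i>0$ while $(\mat v)_i\ge 0$); then $\mathfrak{M}(\mat P)\mat s^{(0)}=c\,\mathfrak{M}(\mat P)\mat u\matge\mat v$, which is~\eqref{eq:20210326}. (One could also simply take $\mat s^{(0)}:=\mat s(\mat A,\mat P)$, satisfying~\eqref{eq:20210326} with equality, but the scaling construction has the advantage of not presupposing $\mat s(\mat A,\mat P)$.)

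The core of the argument is the observation that~\eqref{eq:20210326}, i.e.\ $\mat v\matle(\mat D-\mat F)\mat s^{(0)}$, is equivalent after left-multiplication by $\mat D^{-1}\matge\mat 0$ to $\mat s^{(1)}=\mat G\mat s^{(0)}+\mat D^{-1}\mat v\matle\mat s^{(0)}$, and after left-multiplication by $\mathfrak{M}(\mat P)^{-1}\matge\mat 0$ (fact (F3)) to $\mat s^{(0)}\matge\mat s(\mat A,\mat P)$; recall $\mat s(\mat A,\mat P)\matge\mat 0$ by~\eqref{eq:s}. From these two facts, an induction using only $\mat G\matge\mat 0$ yields $\mat 0\matle\mat s(\mat A,\mat P)\matle\mat s^{(k+1)}\matle\mat s^{(k)}$ for all $k\ge 0$: applying $\mat G$ to $\mat s^{(k)}\matle\mat s^{(k-1)}$ and adding $\mat D^{-1}\mat v$ propagates the monotone decrease, while applying $\mat G$ to $\mat s(\mat A,\mat P)\matle\mat s^{(k)}$ and adding $\mat D^{-1}\mat v$ propagates the lower bound. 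Since all the vectors involved are non-negative, the componentwise bound $\mat s(\mat A,\mat P)\matle\mat s^{(k)}$ upgrades to $\mu(\mat A,\mat P)=\norm{\mat s(\mat A,\mat P)}\le\norm{\mat s^{(k)}}$, the asserted inequality. For the final claim, continuity of $\norm{\cdot}$ together with $\mat s^{(k)}\to\mat s(\mat A,\mat P)$ (either by $\sr{\mat G}<1$, or because a decreasing sequence bounded below by $\mat s(\mat A,\mat P)$ must converge, and passing to the limit in~\eqref{eq:sk} forces the limit to solve $\mathfrak{M}(\mat P)\mat s=\mat v$) gives $\norm{\mat s^{(k)}}\to\norm{\mat s(\mat A,\mat P)}=\mu(\mat A,\mat P)$.

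I do not anticipate a genuine obstacle; the result is essentially bookkeeping on top of facts (F2)--(F4). The one place that warrants care is the pivotal equivalence above: it is precisely what turns the defining inequality~\eqref{eq:20210326} of $\mat s^{(0)}$ into a property preserved by \emph{every} step of~\eqref{eq:sk} --- hence the monotone decrease --- and what simultaneously keeps every iterate above the fixed point $\mat s(\mat A,\mat P)$ --- hence the lower bound on the $\infty$-norm. The non-negativity needed to pass from the partial order $\matle$ to the $\infty$-norm comparison is then automatic.
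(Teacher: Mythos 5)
Your proof is correct and follows essentially the same monotone-iteration argument as the paper: a supersolution $\mat s^{(0)}$ of \eqref{eq:20210326} yields a decreasing sequence of iterates bounded below by the non-negative fixed point $\mat s(\mat A,\mat P)$, whence the norm bound, and convergence follows from $\sr{\abs{\diag(\mat P)}^{-1}\abs{\off(\mat P)}}<1$ (fact (F4)). The only immaterial differences are that the paper establishes the lower bound $\mat s(\mat A,\mat P)\matle\mat s^{(k+1)}$ via the identity $\mat s(\mat A,\mat P)=\mat s^{(k+1)}+\mathfrak{M}(\mat P)^{-1}\abs{\off(\mat P)}\left(\mat s^{(k+1)}-\mat s^{(k)}\right)$ combined with the monotone decrease, rather than by propagating the bound inductively through the fixed-point map as you do, and that it simply takes $\mat s^{(0)}:=\mat s(\mat A,\mat P)$ for the existence claim where you rescale the vector $\mat u$ from fact (F2).
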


\begin{proof}
The existence of a vector $\mat s^{(0)}$ that satisfies~\eqref{eq:20210326} is immediately established upon setting $\mat s^{(0)}:=\mat s(\mat A,\mat P)$, cf.~\eqref{eq:s}. Now consider any vector $\mat s^{(0)}\in\R$ that fulfills~\eqref{eq:20210326}. Then, from~\eqref{eq:sk} with $k=0$, we have
\[
\abs{\diag(\mat P)}\left(\mat s^{(1)}-\mat s^{(0)}\right)
=-\mathfrak{M}(\mat P)\mat s^{(0)}+\abs{\mat A-\mat P}\mat e\matle\mat 0,
\]
which shows that $\mat s^{(1)}-\mat s^{(0)}\matle\mat 0$. Hence, by induction, since $\mathfrak{M}(\mat P)^{-1}\matge\mat 0$, cf.~fact (F3), we note that
\[
\mat s^{(k+1)}-\mat s^{(k)}=\abs{\diag(\mat P)^{-1}\off(\mat P)}\left(\mat s^{(k)}-\mat s^{(k-1)}\right)\matle\mat 0\qquad\forall~ k\ge 1.
\]
Using that $\sr{\abs{\diag(\mat P)^{-1}\off(\mat P)}}<1$, cf.~fact (F4), we infer that the iteration~\eqref{eq:sk} converges to $\mat s(\mat A,\mat P)$ from~\eqref{eq:s}. Moreover, from~\eqref{eq:s} and~\eqref{eq:sk} we deduce the identity
\begin{align*}
\mathfrak{M}(\mat P)\mat s(\mat A,\mat P)
&=\abs{\mat A-\mat P}\mat e
=\abs{\diag(\mat P)}\mat s^{(k+1)}-\abs{\off(\mat P)}\mat s^{(k)}\\
&=\mathfrak{M}(\mat P)\mat s^{(k+1)}+\abs{\off(\mat P)}\left(\mat s^{(k+1)}-\mat s^{(k)}\right),
\end{align*}
for all $k\ge 0$. Exploiting again that $\mathfrak{M}(\mat P)^{-1}\matge\mat 0$, we arrive at
\begin{align*}
\mat s(\mat A,\mat P)
&=\mat s^{(k+1)}+\mathfrak{M}(\mat P)^{-1}\abs{\off(\mat P)}\left(\mat s^{(k+1)}-\mat s^{(k)}\right)\matle\mat s^{(k+1)}.
\end{align*}
Since $\mat s(\mat A,\mat P)$ and $\mat s^{(k+1)}$ are both non-negative, the asserted bound follows.
\end{proof}

The ensuing result, which immediately follows from the previous one, allows for an estimate of the Sassenfeld index without solving the system~\eqref{eq:s}.

\begin{corollary}
Given a matrix $\mat A\in\CC$. Furthermore, let $\mat P\in\set{H}_m$ and $\mat v\in\RR$ such that 
\begin{equation}\label{eq:delta}
|\mat A-\mat P|\mat e\matle\mathfrak{M}(\mat P)\mat v.
\end{equation}
Then, it holds that $\mu(\mat A,\mat P)\le\norm{\mat v}$. 
\end{corollary}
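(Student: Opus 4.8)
The plan is to deduce this corollary directly from Proposition~\ref{prop:it}. The hypothesis~\eqref{eq:delta} is precisely condition~\eqref{eq:20210326} with the choice $\mat s^{(0)}:=\mat v$; note that $\mat v\in\RR$ should be understood here as a real vector with non-negative entries, which is consistent since the left-hand side of~\eqref{eq:delta} is non-negative and $\mathfrak{M}(\mat P)^{-1}\matge\mat 0$ by fact~(F3), forcing $\mat v=\mathfrak{M}(\mat P)^{-1}(\text{something}\matge\mat 0)$ to be non-negative anyway. With this identification, Proposition~\ref{prop:it} applies verbatim: running the iteration~\eqref{eq:sk} from $\mat s^{(0)}=\mat v$ yields $\mu(\mat A,\mat P)\le\norm{\mat s^{(k)}}$ for every $k\ge 0$.

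The only remaining point is to invoke this at $k=0$, which gives exactly $\mu(\mat A,\mat P)\le\norm{\mat s^{(0)}}=\norm{\mat v}$. So the argument is essentially a single line once the correspondence between~\eqref{eq:delta} and~\eqref{eq:20210326} is made explicit.

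I do not anticipate a genuine obstacle here; the corollary is a specialization of the previous proposition. The only mild care needed is to state clearly that $\mat v$ plays the role of the admissible initial vector $\mat s^{(0)}$, and that the bound already holds at the zeroth step, so no passage to the limit is required. A clean write-up:

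\begin{proof}
The bound~\eqref{eq:delta} states precisely that the vector $\mat v$ satisfies condition~\eqref{eq:20210326} in Proposition~\ref{prop:it} (with the role of $\mat s^{(0)}$ played by $\mat v$). Hence, initiating the iteration~\eqref{eq:sk} with $\mat s^{(0)}:=\mat v$, Proposition~\ref{prop:it} yields $\mu(\mat A,\mat P)\le\norm{\mat s^{(k)}}$ for all $k\ge 0$. Taking $k=0$ gives $\mu(\mat A,\mat P)\le\norm{\mat s^{(0)}}=\norm{\mat v}$, as claimed.
\end{proof}
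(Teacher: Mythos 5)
Your proof is correct and matches the paper's intent exactly: the paper presents this corollary as something that ``immediately follows from the previous one,'' i.e.\ by recognizing~\eqref{eq:delta} as condition~\eqref{eq:20210326} with $\mat s^{(0)}=\mat v$ and reading off the bound $\mu(\mat A,\mat P)\le\norm{\mat s^{(k)}}$ at $k=0$. Your additional remark that $\mat v$ is automatically non-negative (since $\mathfrak{M}(\mat P)^{-1}\matge\mat 0$ and the left-hand side of~\eqref{eq:delta} is non-negative) is a sensible clarification of the hypothesis and does not change the argument.
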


\begin{example}[Jacobi preconditioner]
If $\mat P=\diag(\mat A)$ is non-singular then the bound~\eqref{eq:delta} is fulfilled for any vector $\mat v$ with components 
\[
v_i\ge\frac{1}{|a_{ii}|}\sum_{j\neq i}|a_{ij}|,\qquad 1\le i\le m,
\] 
and we have $\mu(\mat A,\diag(\mat A))\le \max_{1\le i\le m}v_i$. For instance, for the classical finite difference matrix
\begin{equation}\label{eq:FDM}
\mat A=\begin{pmatrix}
2 & -1& & \\[-1ex]
-1 & 2 & \ddots&\\
& \ddots &\ddots&-1\\
&&-1 & 2
\end{pmatrix}\in\RR,
\end{equation}
it is easily verified, for $m\ge 3$, that $\mu(\mat A,2\I)=1$.
\end{example}

\begin{example}[Gau\ss-Seidel preconditioner]
If $\mat P=\tril(\mat A)+\diag(\mat A)$ is non-singular then \eqref{eq:delta} translates into
\[
\sum_{j>i}|a_{ij}|\le |a_{ii}|v_i-\sum_{j<i}|a_{ij}|v_j,\qquad 1\le i\le m,
\]
which is essentially the recursive relation~\eqref{eq:SF} for Sassenfeld's original criterion. For the matrix $\mat A$ from~\eqref{eq:FDM}, it holds that $\mu(\mat A,\diag(\mat A)+\tril(\mat A))=1-2^{1-m}<1$. To give an example, for $m=10$, the iterative scheme~\eqref{eq:sk} with the initial vector $\mat s^{(0)}=\mat e$, cf.~\eqref{eq:e}, reaches the exact value of the Sassenfeld index (up to machine precision) after 9 iterations.
\end{example}

\section{Sassenfeld matrices}\label{sc:sm}

\subsection{Definition of Sassenfeld matrices}

We are now ready to introduce the notion of Sassenfeld matrices. Our definition, see Def.~\ref{def:SF} below, is motivated by the work~\cite{BaumannWihler:17}, where the special case of all matrices $\mat A\in\CC$ with $\mu(\mat A,\mat P)<1$, with $\mat P=\tril(\mat A)+\diag(\mat A)$ being the Gau\ss-Seidel preconditioner, has been discussed. In this specific situation, the system \eqref{eq:s} takes the (lower-triangular) form
\[
\abs{\diag(\mat A)}\mat s=\abs{\tril(\mat A)}\mat s+\abs{\triu(\mat A)}\mat e,
\]
which is a simple forward solve for~$\mat s$. Convergence of the Gau\ss-Seidel method is guaranteed if $\norm{\mat s}<1$; this is the key observation in Sassenfeld's original work~\cite{Sassenfeld:51}. 

More generally, for preconditioners $\mat P\in\set{H}_m$ in the current paper, we propose the following definition.

\begin{definition}[Sassenfeld matrices]\label{def:SF}
A matrix $\mat A\in\CC$ is called a \emph{Sassenfeld matrix} if there exists a preconditioner $\mat P\in\set{H}_m$ such that $\mu(\mat A,\mat P)<1$. The set of all Sassenfeld matrices in $\CC$ will be denoted by $\set{S}_m$.
\end{definition}

\begin{remark}
From Cor.~\ref{cor:inv}, for $\tau=0$, we immediately deduce that every Sassenfeld matrix is non-singular.
\end{remark}

The following proposition provides a condition number estimate for the preconditioned matrix $\imat P\mat A$ in terms of the Sassenfeld index. 

\begin{proposition}[Condition number bound]
Suppose that $\mat A\in\set{S}_m$ is a Sassenfeld matrix, and $\mat P\in\set{H}_m$ with $\mu(\mat A,\mat P)<1$. Then, for the condition number (with respect to the $\infty$-norm) the bound
\[
\kappa_\infty(\imat P\mat A)\le\frac{1+\mu(\mat A,\mat P)}{1-\mu(\mat A,\mat P)}
\] 
holds true.
\end{proposition}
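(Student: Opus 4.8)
The plan is to deduce everything from Proposition~\ref{pr:bound} together with a Neumann series estimate. Write $\mu:=\mu(\mat A,\mat P)$ and abbreviate the iteration matrix by $\mat N:=\I-\imat P\mat A$, so that $\imat P\mat A=\I-\mat N$. By Proposition~\ref{pr:bound} we have $\norm{\mat N}\le\mu<1$; this single inequality will drive the whole argument, since a contractive perturbation of the identity is automatically invertible and both its norm and the norm of its inverse are then controlled.

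For the numerator of the condition number I would simply invoke the triangle inequality for the operator $\infty$-norm: $\norm{\imat P\mat A}=\norm{\I-\mat N}\le\norm{\I}+\norm{\mat N}\le 1+\mu$. For the denominator, since $\norm{\mat N}<1$ the Neumann series $\sum_{k\ge0}\mat N^k$ converges in norm and equals $(\I-\mat N)^{-1}=(\imat P\mat A)^{-1}$; using submultiplicativity of the $\infty$-norm this gives $\norm{(\imat P\mat A)^{-1}}\le\sum_{k\ge0}\norm{\mat N}^k\le\sum_{k\ge0}\mu^k=\tfrac{1}{1-\mu}$. Multiplying the two estimates yields
\[
\kappa_\infty(\imat P\mat A)=\norm{\imat P\mat A}\,\norm{(\imat P\mat A)^{-1}}\le\frac{1+\mu}{1-\mu},
\]
which is precisely the claimed bound.

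There is essentially no hard step here; the only point worth a word is the invertibility of $\imat P\mat A$, which is guaranteed twice over — once because $\mat A$ is a Sassenfeld matrix (hence non-singular, as noted in the remark following Def.~\ref{def:SF}) while $\mat P\in\set{H}_m$ is non-singular by fact~(F1), and once, more self-containedly, through the Neumann series above, whose convergence rests solely on $\norm{\mat N}<1$. Thus the argument is short and relies only on Proposition~\ref{pr:bound} and the elementary triangle and submultiplicativity properties of the operator $\infty$-norm.
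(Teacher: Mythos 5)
Your proposal is correct and follows essentially the same route as the paper: the triangle inequality $\norm{\imat P\mat A}\le 1+\norm{\I-\imat P\mat A}\le 1+\mu$ for the numerator and a Neumann series bound $\norm{(\imat P\mat A)^{-1}}\le(1-\mu)^{-1}$ for the denominator, both driven by Proposition~\ref{pr:bound}. Your extra remark on invertibility is a harmless (and accurate) addition.
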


\begin{proof}
Let $\mat C:=\imat P\mat A$. From Prop.~\ref{pr:bound}, we deduce the bound
\[
\norm{\mat C}
\le 1+\norm{\I-\imat P\mat A}\le 1+\mu(\mat A,\mat P).
\]
Moreover, applying a Neumann series, we deduce the estimate
\[
\norm{\mat C^{-1}}
=\norm{\left(\I-(\I-\mat C)\right)^{-1}}
\le\frac{1}{1-\norm{\I-\mat C}}
\le\frac{1}{1-\mu(\mat A,\mat P)}.
\]
This concludes the proof.
\end{proof}

\subsection{Characterization of Sassenfeld matrices ($\set{S}_m=\set{H}_m$)}

We will now establish the main result of this paper, which shows that a Sassenfeld matrices belongs to~$\set{H}_m$ and vice versa.

\begin{theorem}\label{thm:main}
For any $m\ge 1$ it holds $\set{H}_m=\set{S}_m$.
\end{theorem}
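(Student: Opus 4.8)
The plan is to prove the two inclusions $\set{S}_m\subseteq\set{H}_m$ and $\set{H}_m\subseteq\set{S}_m$ separately.

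\medskip

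\textbf{The inclusion $\set{H}_m\subseteq\set{S}_m$.} This direction should be the easier one, since we get to choose the preconditioner. Given $\mat A\in\set{H}_m$, the natural candidate is $\mat P:=\mat A$ itself (which lies in $\set{H}_m$ by assumption). Then $\mat A-\mat P=\mat 0$, so the right-hand side of~\eqref{eq:s} vanishes, forcing $\mat s(\mat A,\mat A)=\mathfrak{M}(\mat A)^{-1}\mat 0=\mat 0$, and hence $\mu(\mat A,\mat A)=0<1$. Thus $\mat A\in\set{S}_m$. (One could alternatively observe that $\I-\mat A^{-1}\mat A=\mat 0$, so the iteration matrix is trivially zero; but the computation via~\eqref{eq:s} is cleaner and fits the framework already set up.)

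\medskip

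\textbf{The inclusion $\set{S}_m\subseteq\set{H}_m$.} This is the substantive direction and where I expect the main work to lie. Let $\mat A\in\set{S}_m$, so there is $\mat P\in\set{H}_m$ with $\mu(\mat A,\mat P)<1$. By fact~(F2) it suffices to produce a vector $\mat u\succ\mat 0$ with $\mathfrak{M}(\mat A)\mat u\succ\mat 0$, i.e.\ to exhibit generalized (row) diagonal dominance of $\mat A$. The idea is to transport the generalized diagonal dominance of $\mat P$ through the Sassenfeld bound. Since $\mat P\in\set{H}_m$, pick $\mat w\succ\mat 0$ with $\mathfrak{M}(\mat P)\mat w\succ\mat 0$; after rescaling we may assume $\mathfrak{M}(\mat P)\mat w\matge\mat e$, hence $\mat w\matge\mathfrak{M}(\mat P)^{-1}\mat e$ because $\mathfrak{M}(\mat P)^{-1}\matge\mat 0$. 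The candidate for $\mat A$ is something like $\mat u:=\mat w$ (or a small perturbation thereof). To check $\mathfrak{M}(\mat A)\mat u\succ\mat 0$, estimate, entry-wise,
\[
\mathfrak{M}(\mat A)\mat u=\abs{\diag(\mat A)}\mat u-\abs{\off(\mat A)}\mat u
\matge \abs{\diag(\mat P)}\mat u-\abs{\mat A-\mat P}\mat u-\abs{\off(\mat P)}\mat u
= \mathfrak{M}(\mat P)\mat u-\abs{\mat A-\mat P}\mat u,
\]
using $\abs{\diag(\mat A)}\matge\abs{\diag(\mat P)}-\abs{\diag(\mat A-\mat P)}$ and $\abs{\off(\mat A)}\matle\abs{\off(\mat P)}+\abs{\off(\mat A-\mat P)}$, together with $\abs{\diag(\mat A-\mat P)}\mat u+\abs{\off(\mat A-\mat P)}\mat u=\abs{\mat A-\mat P}\mat u$. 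So it remains to show $\mathfrak{M}(\mat P)\mat u\succ\abs{\mat A-\mat P}\mat u$ for a suitable $\mat u\succ\mat 0$.

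\medskip

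\textbf{The crux.} The condition $\mu(\mat A,\mat P)<1$ says $\norm{\mathfrak{M}(\mat P)^{-1}\abs{\mat A-\mat P}\mat e}<1$, i.e.\ every component of $\mat s(\mat A,\mat P)=\mathfrak{M}(\mat P)^{-1}\abs{\mat A-\mat P}\mat e$ is $<1$. I want to convert this into a strict inequality $\mathfrak{M}(\mat P)\mat u\succ\abs{\mat A-\mat P}\mat u$. The natural move is to take $\mat u:=\mathfrak{M}(\mat P)^{-1}\mat e\succ\mat 0$ (this is $\succ\mat 0$ when $\mat P$ is irreducible; in general it is $\matge\mat 0$, and one should instead take $\mat u:=\mathfrak{M}(\mat P)^{-1}\mat e + \varepsilon\mat e'$ for a small correction, or use $\mat u := \mat w$ from a strictly positive generalized-dominance vector for $\mat P$). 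With $\mat u=\mathfrak{M}(\mat P)^{-1}\mat e$ one has $\mathfrak{M}(\mat P)\mat u=\mat e$, so the required inequality becomes $\abs{\mat A-\mat P}\mat u\prec\mat e$. Now since $\mat u\matle\norm{\mat u}\mat e=\norm{\mathfrak{M}(\mat P)^{-1}\mat e}\,\mat e =: c\,\mat e$, we get $\abs{\mat A-\mat P}\mat u\matle c\,\abs{\mat A-\mat P}\mat e$, and $\mathfrak{M}(\mat P)^{-1}(c\,\abs{\mat A-\mat P}\mat e)=c\,\mat s(\mat A,\mat P)\matle c\,\mu(\mat A,\mat P)\mat e$. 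This shows $\abs{\mat A-\mat P}\mat u\matle\mathfrak{M}(\mat P)(c\mu(\mat A,\mat P)\mat e)$, which is $\prec\mat e=\mathfrak{M}(\mat P)\mat u$ provided $c\mu(\mat A,\mat P)<1$ --- which is not automatic. The clean fix is therefore to not estimate $\mat u$ by its norm but to run the argument directly: choose $\mat u := \mat s(\mat A,\mat P) + \delta\mat e$ and reorganize, or better, normalize so that $\mat w\succ\mathfrak{M}(\mat P)^{-1}\abs{\mat A-\mat P}\mat e$ componentwise is possible because $\mu<1$ --- indeed $\mathfrak{M}(\mat P)^{-1}\abs{\mat A-\mat P}\mat e\prec\mat e$, and applying $\abs{\mat A-\mat P}$ and then $\mathfrak{M}(\mat P)^{-1}$ repeatedly (a Sassenfeld-type iteration, cf.~\eqref{eq:sk}) keeps producing vectors dominated by $\mat e$; the limit of such an iteration started at $\mat e$ gives a fixed-point-type vector $\mat u$ with $\mathfrak{M}(\mat P)^{-1}\abs{\mat A-\mat P}\mat u\matle\mat u$ and one can arrange strictness. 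I expect the main obstacle to be precisely this bookkeeping: extracting a single positive vector $\mat u$ witnessing $\mathfrak{M}(\mat A)\mat u\succ\mat 0$ from the scalar condition $\mu(\mat A,\mat P)<1$, handling the (reducible) case where $\mathfrak{M}(\mat P)^{-1}\mat e$ may have zero components, and getting the inequalities strict rather than merely weak. Once a valid $\mat u$ is in hand, (F2) closes the argument immediately.
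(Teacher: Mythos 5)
Your first inclusion ($\set{H}_m\subseteq\set{S}_m$ via $\mat P=\mat A$, $\mu(\mat A,\mat A)=0$) is exactly the paper's argument. For the converse you make a correct and clean reduction: since $\abs{\diag(\mat A)}\matge\abs{\diag(\mat P)}-\abs{\diag(\mat A-\mat P)}$ and $\abs{\off(\mat A)}\matle\abs{\off(\mat P)}+\abs{\off(\mat A-\mat P)}$, one has $\mathfrak{M}(\mat A)\mat u\matge\bigl(\mathfrak{M}(\mat P)-\abs{\mat A-\mat P}\bigr)\mat u$ for any $\mat u\matge\mat 0$, so by (F2) it suffices to exhibit $\mat u\succ\mat 0$ with $\bigl(\mathfrak{M}(\mat P)-\abs{\mat A-\mat P}\bigr)\mat u\succ\mat 0$. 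But this is precisely where your proof stops: the candidates you test ($\mat u=\mathfrak{M}(\mat P)^{-1}\mat e$ combined with a norm estimate) fail for the reasons you yourself identify, and the closing paragraph is a list of things one might try rather than an argument. That is a genuine gap, and it sits exactly at the step that carries the content of the theorem.

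The gap is closable, and the missing observation is short: $\mu(\mat A,\mat P)<1$ says that every component of $\mat N\mat e$ is $<1$, where $\mat N:=\mathfrak{M}(\mat P)^{-1}\abs{\mat A-\mat P}\matge\mat 0$; since the components of $\mat N\mat e$ are the row sums of the non-negative matrix $\mat N$, this is the same as $\norm{\mat N}<1$, hence $\sr{\mat N}<1$ and $(\I-\mat N)^{-1}=\sum_{k\ge0}\mat N^k\matge\mat 0$. Consequently $\mat M:=\mathfrak{M}(\mat P)-\abs{\mat A-\mat P}=\mathfrak{M}(\mat P)(\I-\mat N)$ has off-diagonal entries $-|p_{ij}|-|a_{ij}-p_{ij}|\le0$ and non-negative inverse $(\I-\mat N)^{-1}\mathfrak{M}(\mat P)^{-1}\matge\mat 0$, i.e.\ it is a non-singular M-matrix; taking $\mat u:=\mat M^{-1}\mat e$ gives $\mat u\succ\mat 0$ (no row of the non-singular, non-negative matrix $\mat M^{-1}$ can vanish) and $\mat M\mat u=\mat e\succ\mat 0$, which is what your reduction requires. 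Note that this completed route is genuinely different from the paper's: the paper never forms $\mathfrak{M}(\mat P)-\abs{\mat A-\mat P}$, but instead works component-wise with the combined vector $\t_i=\alpha s_i+u_i$, where $\mat u$ is a generalized-dominance vector for $\mat P$ and $\alpha$ is chosen large enough to absorb the error terms, with a case distinction on whether the slack $\delta_i$ in row $i$ vanishes (in which case that row of $\mat A$ equals that of $\mat P$ and the $u_i$ part supplies the strict inequality). Your route, once repaired, is arguably more structural; the paper's avoids invoking M-matrix inverse-positivity beyond what is already in (F2)--(F3).
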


\begin{proof}
If $\mat A\in\set{H}_m$ then $\mu(\mat A,\mat A)=0$, i.e. $\mat A$ is a Sassenfeld matrix. Conversely, suppose that $\mat A\in\set{S}_m$, and select a preconditioner $\mat P\in\set{H}_m$ with $\mu(\mat A,\mat P)<1$. Then, 
writing~\eqref{eq:s} component-wise, there are non-negative real numbers $0\le s_i<1$, $i=1,\ldots, m$, such that
\[
|p_{ii}|s_i-\sum_{j\neq i}|p_{ij}|s_j=\sum_{j=1}^m|a_{ij}-p_{ij}|,\qquad i=1,\ldots,m.
\]
Letting
\begin{equation}\label{eq:di}
\delta_i:=\frac{1}{|a_{ii}|}\sum_{j=1}^m(1-s_j)|a_{ij}-p_{ij}|\ge0,\qquad i=1,\ldots,m,
\end{equation}
and rearranging terms, we observe the identity
\[
\delta_i|a_{ii}|+\sum_{j\neq i} \left(|p_{ij}|+|a_{ij}-p_{ij}|\right)s_j
=\left(|p_{ii}|-|a_{ii}-p_{ii}|\right)s_i,
\]
for each $i=1,\ldots, m$. Applying the triangle inequality on either side, it follows that
\begin{equation}\label{eq:auxs}
\sum_{j\neq i} |a_{ij}|s_j
\le |a_{ii}|(s_i-\delta_i),\qquad i=1,\ldots,m.
\end{equation}
Furthermore, recalling fact~(F2), there are positive numbers $u_1,\ldots,u_m>0$ such that
\begin{equation}\label{eq:auxz}
\sum_{j\neq i} |p_{ij}|u_j< |p_{ii}|u_i,
\end{equation}
for each $i=1,\ldots,m$.  Introduce positive numbers
$\t_i:=\alpha s_i+u_i>0$, $i=1,\ldots,m$, where $\alpha\ge 0$ will be specified later, see~\eqref{eq:alpha} below. Then, for $1\le i\le m$, we have
\begin{align*}
\sum_{j\neq i} |a_{ij}|\t_j
&=\alpha\sum_{j\neq i}|a_{ij}|s_j
+\sum_{j\neq i} |p_{ij}|u_j
+\sum_{j\neq i} \left(|a_{ij}|-|p_{ij}|\right)u_j.
\end{align*}
Employing~\eqref{eq:auxs} and \eqref{eq:auxz}, for each $i=1,\ldots,m$, we derive the estimate
\[
\sum_{j\neq i} |a_{ij}|\t_j
< \alpha(s_i-\delta_i)|a_{ii}|+|p_{ii}|u_i
+\sum_{j\neq i} \left(|a_{ij}|-|p_{ij}|\right)u_j.
\]
Thus, we obtain
\begin{equation}\label{eq:auxsz}
\sum_{j\neq i} |a_{ij}|\t_j<|a_{ii}|\t_i-\alpha\delta_i|a_{ii}|+u_i\left(|p_{ii}|-|a_{ii}|\right)
+\sum_{j\neq i} \left(|a_{ij}|-|p_{ij}|\right)u_j,
\end{equation}
for each $i=1,\ldots,m$. 
Now choose $\alpha\ge 0$ sufficiently large so that
\begin{equation}\label{eq:alpha}
\alpha\delta_i|a_{ii}|\ge u_i\left(|p_{ii}|-|a_{ii}|\right)
+\sum_{j\neq i} \left(|a_{ij}|-|p_{ij}|\right)u_j\qquad\forall i\in\mathcal{I},
\end{equation}
where $\set I$ signifies the set of all indices $1\le i\le m$ for which $\delta_i>0$ in \eqref{eq:di}; we let $\alpha=0$ if $\set I=\emptyset$.
In order to proceed, we distinguish two separate cases:
\begin{enumerate}[(i)]
\item If $\delta_i=0$ then exploiting that $0\le s_j<1$ for each $j=1,\ldots,m$, we notice from~\eqref{eq:di} that $a_{ij}=p_{ij}$ for all $j=1,\ldots,m$. Then, from~\eqref{eq:auxsz}, we infer that
$
\sum_{j\neq i} |a_{ij}|\t_j<|a_{ii}|\t_i,
$ for all $i\not\in\set{I}$.
\item Otherwise, if $\delta_i>0$ then recalling $\alpha$ from~\eqref{eq:alpha}, we obtain that 
$
\sum_{j\neq i} |a_{ij}|\t_j<|a_{ii}|\t_i$, for all $i\in\set{I}$.
\end{enumerate}
In summary, we conclude that
$
\sum_{j\neq i} |a_{ij}|\t_j<|a_{ii}|\t_i,
$
for each $i=1,\ldots,m$, which implies that $\mat A\in\set{H}_m$, cf.~fact~(F2).
\end{proof}

\subsection{Application to splitting methods}

In the context of linear solvers, the following generalization of Sassenfeld's result \cite{Sassenfeld:51} on the Gau\ss-Seidel scheme is an immediate consequence of Prop.~\ref{pr:bound}.

\begin{proposition}[Iterative solvers]
For a Sassenfeld matrix $\mat A\in\set{S}_m$ (or equivalently, for $\mat A\in\set{H}_m$, cf.~Thm.~\ref{thm:main}), and any given vector~$\mat b\in\mathbb{C}^m$, consider the linear system~\eqref{eq:Ax=b}. Then, for a preconditioner $\mat P\in\set{H}_m$ with $\mu(\mat A,\mat P)<1$, the iteration~\eqref{eq:it} converges to the unique solution of~\eqref{eq:Ax=b} for any starting vector $\mat x^{(0)}\in\C$. Furthermore, it holds the \emph{a priori} bound
\[
\norm{\mat x-\mat x_{n}}\le\mu(\mat A,\mat P)^n\norm{\mat x-\mat x_{0}}, 
\]
for any $n\ge 0$.
\end{proposition}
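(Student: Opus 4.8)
The plan is to derive the convergence statement directly from Proposition~\ref{pr:bound}, which supplies the crucial estimate $\norm{\I-\imat P\mat A}\le\mu(\mat A,\mat P)$. First I would observe that, since $\mat P\in\set{H}_m$, fact~(F1) guarantees that $\mat P$ is non-singular, so the iteration~\eqref{eq:it} is well-defined, i.e. $\mat x_{n+1}$ is uniquely determined from $\mat x_n$. Moreover, since $\mat A\in\set{S}_m=\set{H}_m$ (by Thm.~\ref{thm:main}), the matrix $\mat A$ is non-singular as well, so~\eqref{eq:Ax=b} has a unique solution $\mat x$.

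Next I would set up the error recursion. Writing $\mat e_n:=\mat x-\mat x_n$, the exact solution satisfies $\mat P\mat x=(\mat P-\mat A)\mat x+\mat b$, and subtracting~\eqref{eq:it} from this identity gives $\mat P\mat e_{n+1}=(\mat P-\mat A)\mat e_n$, hence $\mat e_{n+1}=\imat P(\mat P-\mat A)\mat e_n=(\I-\imat P\mat A)\mat e_n$. Taking $\infty$-norms and applying the submultiplicativity of the induced matrix norm together with the bound~\eqref{eq:bound} from Prop.~\ref{pr:bound} yields $\norm{\mat e_{n+1}}\le\mu(\mat A,\mat P)\norm{\mat e_n}$. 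Iterating this inequality from $n=0$ produces the claimed \emph{a priori} bound $\norm{\mat x-\mat x_n}\le\mu(\mat A,\mat P)^n\norm{\mat x-\mat x_0}$ for every $n\ge0$.

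Finally, since the hypothesis guarantees $\mu(\mat A,\mat P)<1$, the right-hand side of this bound tends to $0$ as $n\to\infty$, so $\mat x_n\to\mat x$ for every starting vector $\mat x_0\in\C$; this also re-confirms that the limit is the unique solution of~\eqref{eq:Ax=b}. There is really no substantial obstacle here: the entire argument is a short consequence of Prop.~\ref{pr:bound} and Thm.~\ref{thm:main}, the only points requiring a word of care being that $\mat P$ is invertible (so the scheme makes sense) and that the error recursion is exactly $(\I-\imat P\mat A)$ applied repeatedly — both of which are routine. The main conceptual content was already packed into the Sassenfeld-index estimate~\eqref{eq:bound}, so this proposition is essentially a corollary.
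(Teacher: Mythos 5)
Your argument is correct and matches the paper's intent exactly: the paper presents this proposition as an immediate consequence of Prop.~\ref{pr:bound}, and your error recursion $\mat x-\mat x_{n+1}=(\I-\imat P\mat A)(\mat x-\mat x_n)$ combined with the bound~\eqref{eq:bound} is precisely the standard way to make that explicit. No gaps; the remarks on the invertibility of $\mat P$ and $\mat A$ are appropriate housekeeping.
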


\begin{examples}
The above result recovers the well-known fact (see, e.g., \cite{Varga:00}) that, for H-matrices, the splitting method~\eqref{eq:it} converges for both the Jacobi as well as for the Gau\ss-Seidel preconditioners, i.e. for $\mat P=\diag(\mat A)$ and $\mat P=\diag(\mat A)+\tril(\mat A)$, respectively. In the latter case, this follows from Sassenfeld's paper~\cite{Sassenfeld:51}, see also~\cite{BaumannWihler:17}. Moreover, the former case is a consequence of fact~(F4).
\end{examples}

\section{Conclusions}\label{sc:concl}

Inspired by Sassenfeld's historical convergence criterion for the classical Gau\ss-Seidel scheme, we have introduced the notion of the \emph{Sassenfeld index} (with respect to H-matrix preconditioners), which, in turn, gives rise to the set of \emph{Sassenfeld matrices} discussed in this work. Our main result shows that Sassenfeld matrices are equivalent to the class of (non-singular) H-matrices, thereby yielding a new characterization for such matrices. Moreover, an iterative procedure for the computational verification of the proposed \emph{generalized Sassenfeld criterion} is provided.

\bibliographystyle{amsalpha}
\bibliography{myrefs}

\end{document}